\documentclass[11pt]{article}
\usepackage[english]{babel}
\usepackage[a4paper,scale=0.75,hcentering]{geometry}
\usepackage{amsmath,amsthm,amssymb,mathrsfs,mathtools}
\usepackage[hyperfootnotes=false]{hyperref}
\hypersetup{colorlinks=true,linkcolor=blue,urlcolor=red,citecolor=blue}
\allowdisplaybreaks
\newcounter{results}[section]

\theoremstyle{plain}
\newtheorem{theorem}[results]{Theorem}
\newtheorem{lemma}[results]{Lemma}
\newtheorem{proposition}[results]{Proposition}
\theoremstyle{remark}
\newtheorem{remark}[results]{Remark}
\numberwithin{equation}{section}
\newcommand{\R}{\mathbb R}
\newcommand{\de}{\,\mathrm d}
\newcommand{\Haus}{\mathscr H}
\DeclareMathOperator{\Tr}{Tr}
\DeclareMathOperator{\tr}{tr}
\DeclareMathOperator{\dist}{dist}
\DeclareMathOperator{\diver}{div}
\begin{document}

\title{\bf Extremals for Sharp Trace Gagliardo--Nirenberg--Sobolev Inequalities on Convex Cones\thanks{This work is supported by National Key R$\&$D Program of China (Grant 2023YFA1010001) and NSFC(12571123). E-mail addresses: zhou-yx22@mails.tsinghua.edu.cn(Zhou), zhangyf22@mails.tsinghua.edu.cn(Zhang), zou-wm@mail.tsinghua.edu.cn (Zou)}}
\author{{\bf Yuxuan Zhou, Yingfang Zhang, Wenming Zou}\\ {\footnotesize \it Department of Mathematical Sciences, Tsinghua University, Beijing 100084, China.}}
\date{}
\maketitle

\begin{abstract}
{\small
Zugmeyer~\cite{Zug} established sharp trace Gagliardo--Nirenberg--Sobolev inequalities on convex cones for $a\geq n>p>1$. We classify all nonzero real-valued extremals of these inequalities. Let the cone be the epigraph of a finite convex function that is positively homogeneous of degree one, and let $H$ be any norm. We prove that equality holds precisely for functions of the form
$A H(x+te+\ell)^{-(a-p)/(p-1)}$, where $A\in\R\setminus\{0\}$, $t>0$, $e=(0,\ldots,0,1)$, and $\ell$ belongs to the largest vector subspace contained in the cone. The result includes the critical Sobolev trace inequality at $a=n$, as well as nonsmooth norms and cones, half-spaces, and cones containing lines.
\vskip0.1in
\noindent{\bf Key words:} Sharp trace inequalities, Gagliardo--Nirenberg--Sobolev inequalities, extremal functions, convex cones, anisotropic norms.
\vskip0.1in
\noindent{\bf 2020 Mathematics Subject Classification:} Primary 46E35; Secondary 26D10.
}
\end{abstract}

\section{Introduction}\label{sec:introduction}

The sharp Sobolev trace inequality on a half-space bounds the $L^{p_\partial^*}$ norm of the boundary trace in terms of the $L^p$ norm of the gradient. Let $n>p>1$, write $\R^n_+=\{(x',x_n):x_n>0\}$, and set
\begin{equation}\label{eq:historical-exponents}
 p^*=\frac{np}{n-p},\qquad p_\partial^*=\frac{p(n-1)}{n-p}\,.
\end{equation}
The inequality has the form
\begin{equation}\label{eq:halfspace-trace}
 \left(\int_{\R^{n-1}}|\Tr u(x',0)|^{p_\partial^*}\de x'\right)^{1/p_\partial^*}
 \leq S_{n,p}^{\mathrm{tr}}\left(\int_{\R^n_+}|\nabla u|^p\de x\right)^{1/p}\,.
\end{equation}
For $p=2$ and $n\geq3$, Escobar~\cite[Theorem 1]{Escobar} determined the sharp constant in~\eqref{eq:halfspace-trace} and characterized its equality cases in a class of smooth, sufficiently decaying functions. For $n\geq3$ and $1<p<n$, Nazaret~\cite[Theorem 1]{Nazaret} established the sharp inequality for arbitrary norms, with the dual norm in the gradient integral, and exhibited explicit equality cases. Maggi and Neumayer~\cite[Theorem 1.1]{MaggiNeumayer} subsequently characterized the minimizers of the family of problems
\begin{equation}\label{eq:bridge}
 \mathscr S(\tau)=\inf\left\{\|\nabla u\|_{L^p(\R^n_+)}:
 \|u\|_{L^{p^*}(\R^n_+)}=1,\quad
 \|\Tr u\|_{L^{p_\partial^*}(\R^{n-1})}=\tau\right\},\qquad\tau>0\,,
\end{equation}
for $n\geq2$ and $1<p<n$. This family connects the classical Sobolev and trace inequalities. For the Euclidean trace inequality~\eqref{eq:halfspace-trace}, the nonzero nonnegative extremals have the form
\begin{equation}\label{eq:halfspace-extremals}
 u(x',x_n)=A\bigl(|x'-z'|^2+(x_n+t)^2\bigr)^{-\frac{n-p}{2(p-1)}},
 \qquad A>0,\quad t>0,\quad z'\in\R^{n-1}\,.
\end{equation}

Related sharp inequalities have been studied in anisotropic and weighted settings. Cordero-Erausquin, Nazaret and Villani~\cite{CNV} established sharp Sobolev and Gagliardo--Nirenberg inequalities for arbitrary norms in the whole space. Cabr\'e, Ros-Oton and Serra~\cite[Theorem 1.3 and Section 1.3]{CabreRosOtonSerra} proved sharp weighted isoperimetric inequalities in convex cones and obtained corresponding Sobolev inequalities for homogeneous weights satisfying a concavity condition. These inequalities involve norms over the domain. For boundary traces on convex cones, Zugmeyer~\cite[Theorem 1.2 and Corollary 1.3]{Zug} established a sharp family that includes both Sobolev trace and Gagliardo--Nirenberg--Sobolev trace inequalities.

Let $\varphi:\R^{n-1}\to\R$ be finite, convex and positively homogeneous of degree one. Write
\begin{equation*}
 \Omega=\{(x',x_n):x_n\geq\varphi(x')\},\qquad U=\Omega^\circ,\qquad e=(0,\ldots,0,1),\qquad L=\Omega\cap(-\Omega)\,.
\end{equation*}
The subspace $L$ is called the lineality space of $\Omega$. Let $H$ be an arbitrary norm and $H^\circ(\xi)=\sup_{H(y)\leq1}\xi\cdot y$ be its dual norm. Let $a\geq n$, and set
\begin{equation}\label{eq:parameters}
 p'=\frac p{p-1},\qquad q=\frac{p(a-1)}{a-p},\qquad \theta=\frac{a-p}{p(a-n-1)+n}\,.
\end{equation}
For functions in the class
\begin{equation}\label{eq:space}
 X_a=\begin{cases}
 \{f\in W^{1,p}_{\mathrm{loc}}(U): f\in L^q(U),\ \nabla f\in L^p(U)\},&a>n,\\
 \{f\in W^{1,p}_{\mathrm{loc}}(U): f\in L^{p^*}(U),\ \nabla f\in L^p(U)\},&a=n,
 \end{cases}
\end{equation}
write
\begin{equation}\label{eq:functionals}
 E(f)=\int_U H^\circ(\nabla f)^p\de x,\qquad J(f)=\int_{\R^{n-1}}|\Tr f(x',\varphi(x'))|^q\de x'\,.
\end{equation}
In $J(f)$, the measure $\de x'$ is Lebesgue measure in the parameter $x'$ of the boundary graph; we call it the projected boundary measure. When $a>n$, also write $I(f)=\int_U|f|^q\de x$. Zugmeyer's inequality~\cite[Theorem 1.2 and Corollary 1.3]{Zug} is
\begin{equation}\label{eq:sharp}
 J(f)^{1/q}\leq D E(f)^{\theta/p} I(f)^{(1-\theta)/q}\,.
\end{equation}
Here $D=D_{n,p,a,H}(\Omega)$ is the sharp constant. At $a=n$, the exponent $1-\theta$ is zero, and the factor involving $I(f)$ is omitted. The trace is understood in the Sobolev sense. We extend~\eqref{eq:sharp} to all functions in $X_a$ in Section~\ref{sec:preliminaries}.

At $a=n$, inequality~\eqref{eq:sharp} reads
\begin{equation}\label{eq:endpoint-sharp}
 \left(\int_{\R^{n-1}}|\Tr f(x',\varphi(x'))|^{p_\partial^*}\de x'\right)^{1/p_\partial^*}
 \leq D_{n,p,n,H}(\Omega)\left(\int_U H^\circ(\nabla f)^p\de x\right)^{1/p}\,.
\end{equation}
Zugmeyer~\cite[Theorem 1.2]{Zug} proved~\eqref{eq:sharp} with the sharp constant and showed that equality holds for
\begin{equation}\label{eq:zug-model}
 f_0(x)=H(x+e)^{-\frac{a-p}{p-1}}\,.
\end{equation}
His proof uses a strengthened Borell--Brascamp--Lieb inequality and infimal convolution. The function~\eqref{eq:zug-model} attains the sharp constant, but this does not characterize all equality cases. In the present work, we classify all nonzero real-valued extremals in $X_a$.

\begin{theorem}\label{thm:classification}
Let $f\in X_a$ be real-valued and nonzero. Then $f$ attains equality in~\eqref{eq:sharp} if and only if
\begin{equation}\label{eq:family}
 f(x)=A H(x+te+\ell)^{-\frac{a-p}{p-1}}\quad\text{a.e. in }U,
 \qquad A\in\R\setminus\{0\},\quad t>0,\quad\ell\in L\,.
\end{equation}
\end{theorem}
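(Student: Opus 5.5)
The proof splits into two directions.

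\emph{Sufficiency.} Every function in \eqref{eq:family} is obtained from the model $f_0$ in \eqref{eq:zug-model} by three operations: multiplication by $A$, the dilation $x\mapsto x/t$, and the translation $x\mapsto x+\ell$ with $\ell\in L$. Since $\Omega$ is a cone, $t\Omega=\Omega$. Since $L\subset\Omega\cap(-\Omega)$ and $\Omega$ is a convex cone, $\Omega+\ell=\Omega$. Hence both maps preserve $U$ and its boundary graph. Translation along $L$ also preserves the projected boundary measure, because it shears the graph parametrisation by a translation in $x'$. A scaling computation shows that the quotient $J^{1/q}E^{-\theta/p}I^{-(1-\theta)/q}$ is invariant under these operations; the choice of $\theta$ in \eqref{eq:parameters} is exactly dimensional balance. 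Equality for $f_0$, which is Zugmeyer's result, then transfers to the whole family. We also need to check that these functions lie in $X_a$, which follows from the decay $H(x)^{-(a-p)/(p-1)}$ and $a\ge n$.

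\emph{Necessity.} We follow the mass-transport proof of the trace inequality (Nazaret; Maggi--Neumayer), adapted to the cone. First, $|f|$ is also an extremal, and $E(|f|)=E(f)$. Equality forces $f$ to have constant sign: we will show that $|f|>0$ a.e., and then $\nabla f\neq0$ on a nodal set would be incompatible with $f$ being a critical point with connected positivity set. So it suffices to classify nonnegative extremals. Normalise so that $g=f^q$ and $g_0=f_0^q$ are probability densities on $U$ (the case $a>n$). Let $T=\nabla\phi$ be the Brenier map pushing $g$ to $g_0$; it is the gradient of a convex function and maps into $\overline\Omega$. Then run the chain of inequalities:
\begin{itemize}
\item the change of variables $g_0(T)\det D_A^2\phi=g$;
\item the arithmetic--geometric mean inequality $(\det D_A^2\phi)^{1/n}\le \Delta_A\phi/n$;
\item the singular part of the distributional Laplacian dropped with the right sign;
\item integration by parts on $U$, producing a boundary term $\int_{\partial\Omega} f^{q}\,T\cdot\nu$. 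Here $T\cdot\nu\le\,$(support function of $\Omega$)$\,=0$, since $T\in\overline\Omega$ and $\Omega$ is a cone; the paper's projected measure converts this term into the $J$ contribution;
\item the Young/H\"older inequality $-\nabla f\cdot T\le H^\circ(\nabla f)H(T)$.
\end{itemize}
Equality in the whole chain forces, in order:
\begin{enumerate}
\item $D^2_A\phi=\lambda\,\mathrm{Id}$ a.e. with $\lambda$ constant, and the singular part vanishes, so $\phi$ is a quadratic and $T(x)=\lambda x+b$;
\item equality in the H\"older step, which ties $\nabla f$ to $T$ through the duality map of $H$;
\item equality in the boundary step, which forces $b\cdot\nu=0$ for a.e.\ normal. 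Equivalently $b$ is orthogonal to all the normals, which together with $T(U)\subset\overline\Omega$ gives $\pm b\in\Omega$, i.e.\ $b\in L$ (up to the $e$ component, which is absorbed into $t$).
\end{enumerate}
Solving the Hölder equality condition (the eikonal-type relation) with $T$ affine then gives $f=A H(x+te+\ell)^{-(a-p)/(p-1)}$. For $a=n$ we use the same argument with $g=f^{p^*}$, or treat it by the analogous Sobolev trace transport.

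\emph{Main obstacle.} The hardest part is the rigorous equality analysis at low regularity. The Brenier potential has only Alexandrov Hessians. $H$ may be nonsmooth, so its subdifferential is set-valued and the Hölder equality condition only says that $\nabla f\in -H(T)^{p-1}\,\partial H(T)$-type sets. The cone may have corners and a nontrivial lineality space. To handle this we will:
\begin{itemize}
\item prove that $f>0$ a.e. and that $\nabla f$ has full support, using the Euler--Lagrange equation and a Harnack/strong-maximum argument for the anisotropic $p$-Laplacian;
\item deduce that $\phi\in W^{1,1}_{\mathrm{loc}}$ with no singular Laplacian part on the connected set $U$, so that $D^2\phi=\lambda\,\mathrm{Id}$ gives $T$ affine globally;
\item recover $f$ from $T$ by integrating along rays rather than by using differentiability of $H$. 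The candidate $f$ has a gradient consistent with $T$ by homogeneity of $H$, and uniqueness of $f$ up to a constant on the connected set $U$ then yields the formula even for nonsmooth $H$.
\end{itemize}
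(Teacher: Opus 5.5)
Your mass-transport framework is the right one, but the chain as you set it up does not prove \eqref{eq:sharp}, so its equality cases cannot be read off from it.

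\emph{The chain does not close.} For $a>n$ the density to transport is not $f^q$. It is $\rho=f^s$ with $s=pa/(a-p)$, chosen so that $\rho^{1-1/a}=f^q$. The determinant step must then be the ``dimension $a$'' inequality $(\det B)^{1/a}\le(a-n+\tr B)/a$, and the $a-n$ there is exactly what produces the interior term $(a-n)\int_Uf^q$. With density $f^q$ and the $n$-dimensional AM--GM, the natural weight in the integration by parts is $f^{q(n-1)/n}$. So the boundary term is not $J(f)$, and no term $(a-n)\int_U f^q$ arises.

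The boundary step has a similar problem. $T\cdot\nu\le0$ only says $-\int h\,T\cdot\nu\de\Haus^{n-1}\ge0$, which gives a Sobolev inequality on the cone, not a trace inequality. You must transport onto the shifted cone $D_1=e+\Omega$, the image of $u_0^s\de x$ under $x\mapsto x+e$. Then $(T-e)\cdot\nu\le0$, and \eqref{eq:projected} gives $-\int h\,T\cdot\nu\de\Haus^{n-1}\ge\int \Tr h\de x'$.

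Both points matter for the equality analysis. For $a>n$ it is equality in $t^{n/a}\le1+\frac na(t-1)$ that forces $B=I_n$, not merely $\lambda I_n$. The vector $e$ in $b_0=e+\ell_0$ is forced by the shifted target, through the support identity $\sigma_0\Omega+b_0=e+\Omega$. That identity needs only $f>0$ in $U$. Your boundary-equality argument would instead need $\Tr f>0$ almost everywhere on $\partial U$, which you do not have at that stage.

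\emph{Equality in \eqref{eq:sharp} does not transfer to the chain automatically.} The chain yields only the non-homogeneous statement $\mathcal D_a(f)\ge0$ under $\int_Uf^s\de x=1$. For $a>n$ this quantity is not invariant under mass-preserving dilations. For instance, $\mu^{n/s}u_0(\mu x)$ with $\mu\ne1$ is a normalised extremal with $\mathcal D_a>0$.

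The missing step is Lemma~\ref{lem:normalization}:
\begin{itemize}
\item optimise over the constant multiple, via Young's inequality in $z=M(F)^\gamma$;
\item optimise over the dilation $\lambda_*$;
\item identify the optimised constant with $D^q$ by evaluating at $u_0$;
\item conclude $\mathcal D_a(f)=0$, because the chain in \eqref{eq:normalchain}, at the optimal dilation, is bounded by $D^qE(F)^{q\theta/p}I(F)^{1-\theta}-J(F)=0$.
\end{itemize}

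\emph{Positivity must hold everywhere, not almost everywhere.} You state the goal as $f>0$ a.e. That is useless for killing $\mu_s$, which lives on a Lebesgue-null set. It is also useless for fixing the sign of $F$: consider $x_1$. You need a continuous representative with a positive lower bound on each compact set. The paper obtains this from local minimality and the theory of quasiminima, which needs only growth bounds. An Euler--Lagrange equation is not available in the usual form when $H^\circ$ is not differentiable.

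\emph{Recovering $f$ from $T$.} Integrating the H\"older equality is unnecessary. Once $T=\sigma_0x+b_0$, the Monge--Amp\`ere identity $W(T)=g(\det B)^{1/a}$ gives $g=f^{-r}$ explicitly. Integrating the Fenchel equality $\nabla g=\nabla W(T)$ instead leaves an additive constant in $g$, which only the Jacobian equation removes. Nonsmoothness of $H$ causes no trouble here, since $T$ is an affine bijection and $H$ is differentiable almost everywhere.
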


\begin{remark}\label{rem:weighted}
Recently, do {\'O} and Silva~\cite[Theorems 1.2 and 1.3]{doOSilva} proved sharp weighted Sobolev trace inequalities and classified their extremals under compatibility assumptions on the weight, the cone and a distinguished direction. On a convex cone $\Sigma\subset\R^n$, their inequalities compare boundary integrals with respect to $w\de\Haus^{n-1}$ with the weighted gradient energy $\int_\Sigma|\nabla u|^p w\de x$. Our inequality~\eqref{eq:sharp} uses the projected boundary measure $\de x'$, permits an arbitrary norm, and includes the interior norm $\|f\|_{L^q(U)}$ when $a>n$. These results therefore concern different sharp trace inequalities.
\end{remark}

Theorem~\ref{thm:classification} characterizes all nonzero real-valued extremals. The norm need not be smooth or strictly convex. The cone may have a nonsmooth boundary or contain lines; half-spaces are included. The vectors $\ell\in L$ in~\eqref{eq:family} are exactly those for which $\Omega+\ell=\Omega$. For a half-space, these are all vectors tangent to its boundary. The classification concerns functions attaining equality in~\eqref{eq:sharp}.

The proof adapts the analysis of equality cases by Cordero-Erausquin, Nazaret and Villani~\cite[Section 4]{CNV} to the boundary measure $\de x'$. We first use interior variations to prove that every nonzero nonnegative extremal has a strictly positive lower bound on each compact subset of $U$. We then prove a boundary integration formula on the Lipschitz epigraph, including the singular part of the distributional Hessian of the transport potential. The resulting estimate forces the transport map to be affine. Comparing the supports of the measures before and after transport shows that its translation vector has the form $e+\ell$, with $\ell\in L$. To obtain equality in this transport estimate from equality in~\eqref{eq:sharp}, we multiply the extremal by a positive constant and, when $a>n$, rescale its argument. This step uses Young's inequality and the dilations in Zugmeyer's argument~\cite[Section 3.2]{Zug}.

Section~\ref{sec:preliminaries} contains the approximation, regularity and boundary integration results. In Section~\ref{sec:proof}, we prove the transport estimate and the classification theorem.

\section{Preliminary results}\label{sec:preliminaries}

Convexity and positive homogeneity imply that $\varphi$ is subadditive. Since a finite convex function on $\R^{n-1}$ is continuous, $K=\max_{|z'|=1}|\varphi(z')|$ is finite. Then
\begin{equation*}
 \varphi(x')-\varphi(y')\leq\varphi(x'-y')\leq K|x'-y'|\,.
\end{equation*}
Interchanging $x'$ and $y'$ gives $|\varphi(x')-\varphi(y')|\leq K|x'-y'|$. Thus $U$ is a global Lipschitz graph domain. At every differentiability point $x'$ of $\varphi$, Euler's identity gives $\nabla\varphi(x')\cdot x'=\varphi(x')$, while convexity gives
\begin{equation*}
 \varphi(z')\geq\varphi(x')+\nabla\varphi(x')\cdot(z'-x')
 \qquad(z'\in\R^{n-1})\,.
\end{equation*}
Consequently, at almost every graph point $x=(x',\varphi(x'))$, the outward unit normal satisfies
\begin{equation}\label{eq:normal}
 \nu(x)=\frac{(\nabla\varphi(x'),-1)}{\sqrt{1+|\nabla\varphi(x')|^2}},\qquad x\cdot\nu(x)=0,\qquad z\cdot\nu(x)\leq0\quad(z\in\Omega)\,.
\end{equation}
The graph area formula gives
\begin{equation}\label{eq:projected}
 (-e\cdot\nu)\de\Haus^{n-1}=\de x'\,.
\end{equation}
These boundary identities hold $\Haus^{n-1}$-almost everywhere, even when the graph has vertices or edges.

We next construct smooth approximations for $f\in X_a$. Extend $f$ to $\R^n$ by reflection across the graph:
\begin{equation*}
 \mathcal Ef(x',x_n)=\begin{cases}f(x',x_n),&x_n\geq\varphi(x'),\\
 f(x',2\varphi(x')-x_n),&x_n<\varphi(x'),\end{cases}
\end{equation*}
For every finite $t\geq1$ such that $f\in L^t(U)$, this extension satisfies $\|\mathcal Ef\|_{L^t(\R^n)}=2^{1/t}\|f\|_{L^t(U)}$. It also satisfies $\|\nabla\mathcal Ef\|_{L^p(\R^n)}\leq C_{p,K}\|\nabla f\|_{L^p(U)}$. Indeed, $q>p$ when $a>n$, and $p^*>p$ when $a=n$, so $f\in W^{1,p}(U\cap B_R)$ for every $R>0$. Flattening the Lipschitz graph and reflecting across the resulting hyperplane gives the asserted Sobolev extension and gradient bound.

For $a>n$, apply the Euclidean Sobolev inequality~\cite[Theorem 2(ii)]{CNV} to $\min\{(|\mathcal Ef|-\varepsilon)_+,M\}$. This truncation is bounded and is nonzero only on a set of finite measure. It therefore belongs to $L^p(\R^n)$, and its weak gradient is bounded by $|\nabla\mathcal Ef|$. Letting $M\to\infty$ and then $\varepsilon\downarrow0$, we obtain $\mathcal Ef\in L^{p^*}(\R^n)$ and hence $f\in L^{p^*}(U)$. At $a=n$, the same integrability of $\mathcal Ef$ follows directly from $f\in L^{p^*}(U)$. Define
\begin{equation}\label{eq:chainexponents}
 r=\frac p{a-p},\qquad s=\frac{pa}{a-p}=ar=(q-1)p',\qquad q=(a-1)r\,,
\end{equation}
If $a>n$, then $q<s<p^*$, so interpolation between $L^q(U)$ and $L^{p^*}(U)$ gives $f\in L^s(U)$. If $a=n$, then $s=p^*$, and this conclusion is part of the definition of $X_a$.

By applying a cutoff to $\mathcal Ef$ and then mollifying, we obtain $f_j=u_j|_U$, with $u_j\in C_c^\infty(\R^n)$, such that $\nabla f_j\to\nabla f$ in $L^p(U)$. Moreover, $f_j\to f$ in $L^q(U)$ if $a>n$, and in $L^{p^*}(U)$ if $a=n$. The gradient error from a cutoff at radius $R$ is bounded by
\begin{equation*}
 R^{-1}\|\mathcal Ef\|_{L^p(B_{2R}\setminus B_R)}\leq c_n\|\mathcal Ef\|_{L^{p^*}(B_{2R}\setminus B_R)}\to 0\,.
\end{equation*}
For a difference $v=f_j-f_k$, apply Zugmeyer's inequality~\cite[Theorem 1.2 and Corollary 1.3]{Zug} to smooth nonnegative approximations of $|v|$ and pass to the limit. Thus~\eqref{eq:sharp} holds for $v$, and the traces of $f_j$ form a Cauchy sequence in $L^q(\de x')$. On each bounded boundary patch, continuity of the Sobolev trace operator identifies the limit with $\Tr f$. Passing to the limit in~\eqref{eq:sharp} proves the inequality for every $f\in X_a$ and shows that $J(f)<\infty$.

For interior regularity, we use the estimates for scalar quasiminima of variational integrals in Giaquinta and Giusti~\cite{GiaG} and Di Benedetto and Trudinger~\cite{DiT}. Their hypotheses are growth bounds on the integrand, so they apply even when $H$ is not differentiable.

\begin{lemma}\label{lem:regularity}
Let $f\in X_a$ be a nonzero nonnegative extremal of~\eqref{eq:sharp}. Then $f$ has a locally H\"older continuous representative in $U$, with $\min_{\mathcal K} f>0$ for every compact $\mathcal K\Subset U$. Moreover, the function $h=f^q$ satisfies $\nabla h\in L^1(U)$, $\Tr h=(\Tr f)^q$, and
\begin{equation}\label{eq:hspaces}
 h\in L^1(U)\quad(a>n),\qquad h\in L^{n/(n-1)}(U)\quad(a=n)\,.
\end{equation}
\end{lemma}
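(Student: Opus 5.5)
We use only interior variations, compactly supported in $U$, so the boundary plays no role in the first part. Let $\psi\in C_c^\infty(U)$. Since $\psi$ vanishes near $\partial U$, we have $\Tr(f+\varepsilon\psi)=\Tr f$ and so $J(f+\varepsilon\psi)=J(f)$. Extremality then gives
\begin{equation*}
 J(f)^{1/q}\le D\,E(f+\varepsilon\psi)^{\theta/p}I(f+\varepsilon\psi)^{(1-\theta)/q}
\end{equation*}
for all $\varepsilon$, with equality at $\varepsilon=0$. For $a=n$ this says $f$ minimizes $E$ among functions with its own trace, i.e. $f$ is a local minimizer of $\int H^\circ(\nabla u)^p$ in $U$.

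For $a>n$ we take logarithms. Using convexity of $E$, we compare
\begin{equation*}
 \tfrac\theta p\log E(f+\varepsilon\psi)+\tfrac{1-\theta}q\log I(f+\varepsilon\psi)
\end{equation*}
with its value at $\varepsilon=0$. After cancelling constants this yields the quasiminimality inequality
\begin{equation*}
 \int_{\operatorname{supp}\psi}H^\circ(\nabla f)^p\le Q\int_{\operatorname{supp}\psi}\bigl(H^\circ(\nabla(f+\psi))^p+|f+\psi|^q+|f|^q+1\bigr)
\end{equation*}
on balls $B\Subset U$, with $Q$ depending on $E(f),I(f)$. Concretely, we linearize $\log I$ by its concave upper bound, since $I(f+\psi)\le I(f)+\int_B(|f+\psi|^q-|f|^q)$. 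The integrand $H^\circ(\xi)^p$ is comparable to $|\xi|^p$ by equivalence of norms, so the growth hypotheses of Giaquinta--Giusti \cite{GiaG} hold. Their theory gives local boundedness and local H\"older continuity of $f$. The lower-order term $|u|^q$ needs $q$ below the critical exponent $p^*$ locally, or else a bounded truncation argument; since $q<p^*$ when $a>n$, this is fine.

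For positivity we use that $f\ge0$. The quasiminimality passes to $f$ as a quasi-superminimizer, since it holds for variations $\psi\ge0$ as well. Di Benedetto--Trudinger \cite{DiT} then provides a weak Harnack inequality for nonnegative quasiminima, as in \cite{DiT}. Hence on each ball $B_{2\rho}\Subset U$, either $f\equiv0$ on $B_\rho$ or $\inf_{B_\rho}f>0$. The set where $f>0$ is therefore open; by the Harnack chain argument its complement in $U$ is also open. Since $U$ is connected (it is an epigraph), either $f\equiv0$ in $U$ or $f>0$ everywhere. If $f\equiv0$ in $U$, then $\Tr f=0$ and $E=I=0$, contradicting $f\ne0$. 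Continuity then gives $\min_{\mathcal K}f>0$ on compacts. The main obstacle is verifying that the nonlocal coupling through $E(f)$ and $I(f)$ genuinely yields a scalar quasiminimum of the required form, including the lower-order term. I would first try the derivative (Euler--Lagrange) route, in the subdifferential sense since $H$ may be nonsmooth. If that is too delicate, I would fall back on the direct comparison above, with constants fixed by $f$.

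For the statements about $h=f^q$, note that $q>1$ and $f\ge0$. The chain rule, applied to truncations $\min\{f,M\}$ and then letting $M\to\infty$, gives
\begin{equation*}
 \nabla h=qf^{q-1}\nabla f .
\end{equation*}
By H\"older, $\|\nabla h\|_{L^1}\le q\|f^{q-1}\|_{L^{p'}}\|\nabla f\|_{L^p}$. This is finite because $(q-1)p'=s$ and $f\in L^s(U)$, which was shown earlier via \eqref{eq:chainexponents}. For $a>n$ we have $h\in L^1$ since $f\in L^q$. For $a=n$ we have $q=p_\partial^*$ and $qn/(n-1)=p^*$, so $h\in L^{n/(n-1)}$. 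Finally, $\Tr h=(\Tr f)^q$ follows from the approximations $f_j$ constructed above. The traces $f_j\to\Tr f$ in $L^q(\de x')$, and on bounded patches $f_j^q\to h$ in $W^{1,1}$, by the same H\"older estimate applied to $f_j^{q-1}\nabla f_j$ with $|f_j|$. Continuity of the $W^{1,1}$ trace on Lipschitz patches then identifies the two limits.
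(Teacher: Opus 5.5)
There is a genuine gap in your positivity step for $a>n$. The inequality you extract,
\[
\int_{\operatorname{supp}\psi}H^\circ(\nabla f)^p\le Q\int_{\operatorname{supp}\psi}\bigl(H^\circ(\nabla(f+\psi))^p+|f+\psi|^q+|f|^q+1\bigr),
\]
is adequate for the Giaquinta--Giusti theory (local boundedness and H\"older continuity, since $p\le q<p^*$). However, it discards exactly the structure needed for strict positivity. With an additive constant and lower-order terms of degree $q\neq p$, the Di Benedetto--Trudinger Harnack estimates hold only with an additive error, of the form $\sup_{B_\rho}f\le C(\inf_{B_\rho}f+k(\rho))$ with $k(\rho)>0$, and no dichotomy follows. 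In fact the dichotomy is false for your inequality. Take $p=2$ and $H=|\cdot|$. The function $u(x)=|x-x_0|^2$ minimizes $\int(\frac12|\nabla u|^2+2nu)$. Hence it satisfies $\int|\nabla u|^2\le\int|\nabla(u+\psi)|^2+4n\int(|u+\psi|+|u|)$, which has your form after using $|w|\le|w|^q+1$. Yet $u\ge0$ vanishes at $x_0$ without vanishing near it.

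The missing idea is to keep the exact variational structure. Apply concavity of $\log$ (equivalently, weighted AM--GM) to \emph{both} factors $E$ and $I$, not only to $I$. This gives $\frac\theta p\frac{E(v)-E(f)}{E(f)}+\frac{1-\theta}q\frac{I(v)-I(f)}{I(f)}\ge0$. Thus $f$ is a genuine local minimizer of $\int_{\mathcal O}\bigl(\frac1pH^\circ(\nabla u)^p+\frac\kappa q|u|^q\bigr)$ with $\kappa=(1-\theta)E(f)/(\theta I(f))$, with $Q=1$ and no additive term.

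Once local boundedness is known, choose $m>\sup_{\mathcal O}f$. Truncating competitors at $\pm m$ does not increase this functional. So $f$ also minimizes the integrand $\frac1pH^\circ(\xi)^p+\frac\kappa q\min\{|z|,m\}^q$, which is at most $c_1|\xi|^p+\frac\kappa q m^{q-p}|z|^p$ because $q>p$. This is Di Benedetto--Trudinger's structure condition with zero additive term. Their Harnack inequality then holds without error, and connectedness of $U$ gives $f>0$. For $a=n$ your argument works, provided you use the exact minimality of $E$ directly rather than a weakened quasiminimality.

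The statements about $h$ are correct. Your identification of $\Tr h$ through the approximations $f_j$ is a valid alternative to the paper's truncation $\min\{f,M\}$.
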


\begin{proof}
Let $\mathcal L(\xi)=H^\circ(\xi)^p/p$. By equivalence of norms, there are positive constants $c_0,c_1$ such that $c_0|\xi|^p\leq\mathcal L(\xi)\leq c_1|\xi|^p$. Let $v\in X_a$ agree with $f$ outside a compact subset of $U$. Then $\Tr v=\Tr f$. For $a>n$, equality in~\eqref{eq:sharp} for $f$ and the same inequality for $v$ give
\begin{equation*}
 \left(\frac{E(v)}{E(f)}\right)^{\theta/p}
 \left(\frac{I(v)}{I(f)}\right)^{(1-\theta)/q}\geq1\,.
\end{equation*}
Here $E(f)>0$, since zero gradient would make $f$ a constant and integrability would force $f=0$. By the weighted arithmetic--geometric mean inequality, we obtain
$\frac\theta p\frac{E(v)}{E(f)}+
\frac{1-\theta}q\frac{I(v)}{I(f)}\geq
\frac\theta p+\frac{1-\theta}q$.
Hence $f$ locally minimizes
\begin{equation}\label{eq:localminimum}
 \int_{\mathcal O}\left(\mathcal L(\nabla u)+\frac\kappa q|u|^q\right)\de x,\qquad
 \kappa=\frac{(1-\theta)E(f)}{\theta I(f)}\,,
\end{equation}
among functions that agree with $f$ outside a compact subset of $\mathcal O\Subset U$. Competitors $v$ may change sign: since $f\geq0$, the function $|v|$ still agrees with $f$ outside the same compact set. Moreover, $H^\circ(\nabla|v|)=H^\circ(\nabla v)$ almost everywhere, so replacing $v$ by $|v|$ leaves the integral in~\eqref{eq:localminimum} unchanged. At $a=n$, the equality of traces gives $E(v)\geq E(f)$, and $f$ minimizes~\eqref{eq:localminimum} with $\kappa=0$.

For an admissible competitor satisfying $0\leq v\leq f$, we have $\int_{\mathcal O}v^q\leq\int_{\mathcal O}f^q$. Minimality in~\eqref{eq:localminimum} therefore gives $\int_{\mathcal O}\mathcal L(\nabla f)\leq\int_{\mathcal O}\mathcal L(\nabla v)$. For an admissible $v\leq f$ that takes negative values, apply this argument to $v_+=\max\{v,0\}$ and use $\mathcal L(\nabla v_+)\leq\mathcal L(\nabla v)$. It follows that
\begin{equation*}
 \int_{\mathcal O}|\nabla f|^p\de x\leq\frac{c_1}{c_0}\int_{\mathcal O}|\nabla v|^p\de x
 \qquad\text{for every such }v\leq f\,.
\end{equation*}
Thus $f$ is a sub-quasiminimum of the Euclidean $p$-Dirichlet integral in the sense of Di Benedetto and Trudinger, and their local boundedness estimate~\cite[Lemma 4.1 and Corollary 1]{DiT} applies.

Fix $\mathcal O\Subset U$ and choose $m>\operatorname*{ess\,sup}_{\mathcal O} f$. For an admissible competitor $v$, define
\begin{equation*}
 v_m=\max\{-m,\min\{v,m\}\}\,.
\end{equation*}
This truncation is still admissible and has no larger gradient energy. Applying~\eqref{eq:localminimum} to $v_m$ shows that $f$ also minimizes the integral with integrand
\begin{equation*}
 \mathcal F_m(z,\xi)=\mathcal L(\xi)+\frac\kappa q\min\{|z|,m\}^q\,.
\end{equation*}
Since $q>p$,
\begin{equation}\label{eq:regularity-growth}
 c_0|\xi|^p\leq\mathcal F_m(z,\xi)\leq c_1|\xi|^p+\frac\kappa q m^{q-p}|z|^p\,.
\end{equation}
When $z$ is restricted to a bounded interval,~\eqref{eq:regularity-growth} gives the growth bound required by Giaquinta and Giusti~\cite[Theorem 4.2]{GiaG}, with an additive constant depending on that interval. Their theorem yields local H\"older continuity. To obtain Harnack's inequality, divide~\eqref{eq:regularity-growth} by $c_0$. The resulting bound satisfies condition (4.2) of Di Benedetto and Trudinger~\cite[p.\ 305]{DiT}: the coefficient of $|z|^p$ is bounded, and the additive term denoted by $g$ in that condition is zero. Their Harnack inequality~\cite[Corollary 3, p.\ 307]{DiT} therefore applies with no additive error term. Since $U$ is connected and $f\not\equiv0$, $f$ is strictly positive in $U$. Continuity then gives a positive lower bound on each compact subset of $U$.

Finally,~\eqref{eq:chainexponents} gives
\begin{equation*}
 \int_U|\nabla(f^q)|\de x\leq q\|\nabla f\|_{L^p(U)}\|f\|_{L^s(U)}^{q-1}<\infty\,.
\end{equation*}
To justify the chain rule and the trace identity, first replace $f$ by $\min\{f,M\}$ and then let $M\to\infty$, using convergence of the corresponding powers in $W^{1,1}$ on bounded boundary patches. If $a=n$, the identity $qn/(n-1)=p^*$ gives~\eqref{eq:hspaces}. If $a>n$, the required integrability of $h=f^q$ follows from $f\in L^q(U)$.
\end{proof}

We follow Cordero-Erausquin, Nazaret and Villani~\cite[Section 4, Proposition 6 and Lemma 7]{CNV} in treating the distributional Hessian as a matrix-valued measure, including its singular part. The next lemma establishes the boundary integration inequality needed for the measure $\de x'$ on our Lipschitz epigraph. Set $D_1=e+\Omega$.

\begin{lemma}\label{lem:green}
Let $h\geq0$ be continuous in $U$ and belong to $W^{1,1}(U\cap B_R)$ for every $R>0$. Assume that $\nabla h\in L^1(U)$, $\Tr h\in L^1(\de x')$, and either $h\in L^1(U)$ or $h\in L^{n/(n-1)}(U)$. Let $\Phi$ be finite and convex on $U$, with $T=\nabla\Phi\in D_1$ almost everywhere and
\begin{equation}\label{eq:pairassumption}
 \int_U H^\circ(\nabla h)H(T)\de x<\infty\,.
\end{equation}
The distributional Laplacian $\Delta_{\mathcal D'}\Phi$ is a nonnegative Radon measure, and
\begin{equation}\label{eq:green}
 \int_Uh\de(\Delta_{\mathcal D'}\Phi)+\int_{\R^{n-1}}\Tr h\de x'\leq-\int_U\nabla h\cdot T\de x\,.
\end{equation}
In particular, the integral of $h$ against $\Delta_{\mathcal D'}\Phi$ is finite.
\end{lemma}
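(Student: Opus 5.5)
We will prove \eqref{eq:green} by smoothing $\Phi$ and $h$, integrating by parts on truncated domains, and passing to the limit. The singular part of the Hessian is handled by lower semicontinuity, as in \cite[Lemma 7]{CNV}.

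\textbf{Nonnegativity of the Laplacian.} Since $\Phi$ is convex on $U$, its distributional Hessian is a matrix-valued Radon measure with nonnegative diagonal. Hence $\Delta_{\mathcal D'}\Phi\geq\Delta_{\mathrm{ac}}\Phi\,\de x\geq0$ is a nonnegative Radon measure.

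\textbf{Regularization.} Fix a cutoff $\chi_R\in C_c^\infty(\R^n)$ with $0\leq\chi_R\leq1$, $\chi_R=1$ on $B_R$ and $|\nabla\chi_R|\leq 2/R$. To avoid the lack of smoothness of $\Phi$ up to $\partial U$, shift inward. For $\delta>0$, set $\Phi_\delta(x)=\Phi(x+\delta e)$. This function is convex on $U-\delta e\supset\overline U$, because $\Omega+\delta e\subset U$ (we use $e\in U$). Mollify at scale $\varepsilon<\delta/C_K$ to get a smooth convex $\Phi_{\delta,\varepsilon}$ on a neighborhood of $\overline U\cap B_{2R}$. Its gradient $T_{\delta,\varepsilon}$ is an average of values of $T$, so $T_{\delta,\varepsilon}\in D_1$, using that $D_1$ is convex and closed.

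For $h\chi_R$, which is Lipschitz-approximable and in $W^{1,1}$ of a bounded Lipschitz domain, the divergence theorem on $U\cap B_{2R}$ gives
\begin{equation*}
 \int_U h\chi_R\,\Delta\Phi_{\delta,\varepsilon}\de x+\int_{\partial U}\Tr h\,\chi_R\,T_{\delta,\varepsilon}\cdot(-\nu)\de\Haus^{n-1}=-\int_U\nabla(h\chi_R)\cdot T_{\delta,\varepsilon}\de x.
\end{equation*}
This is first proved for smooth approximants of $h$, then extended by continuity of the trace.

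\textbf{Boundary term.} Write $T_{\delta,\varepsilon}=e+z$ with $z\in\Omega$. By \eqref{eq:normal}, $-z\cdot\nu\geq0$, and by \eqref{eq:projected}, $(-e\cdot\nu)\de\Haus^{n-1}=\de x'$. Since $h\geq0$, the boundary integral is at least $\int\Tr h\,\chi_R\de x'$. This is exactly where the cone geometry enters.

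\textbf{Limits.} Let $\varepsilon\to0$ and then $\delta\to0$. On the left, $\Delta\Phi_{\delta,\varepsilon}\,\de x\rightharpoonup\Delta_{\mathcal D'}\Phi$ locally weakly-$*$ in $U$. For every nonnegative $\psi\in C_c(U)$ with $\psi\leq h\chi_R$, the left side is eventually at least $\int\psi\de\Delta_{\mathcal D'}\Phi$. Taking the supremum over such $\psi$ and using continuity of $h$ gives $\int h\chi_R\de\Delta_{\mathcal D'}\Phi$. On the right, $T_{\delta,\varepsilon}\to T$ a.e. For domination we use $|\nabla h\cdot T_{\delta,\varepsilon}|\leq H^\circ(\nabla h)H(T_{\delta,\varepsilon})$ and \eqref{eq:pairassumption}.

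The \emph{main obstacle} is that $H(T_{\delta,\varepsilon})$ is not pointwise bounded by $H(T)$. I would handle it by first letting $\varepsilon\to0$ at fixed $\delta$. Here $T_\delta=T(\cdot+\delta e)$ is locally bounded away from $\partial U$, but that alone is not enough near the boundary. So I would instead choose the order of limits together with a Vitali/equi-integrability argument. Convexity gives monotonicity along $e$: $\partial_n\Phi(x+\delta e)$ is nondecreasing in $\delta$, which controls $T_\delta\cdot e$. If this fails, I would restrict to $h\chi_R$ with $\chi_R$ also vanishing near $\partial U$, which is fine except at the boundary term. Alternatively, I would approximate $\Phi$ only in the interior region $\{x_n>\varphi(x')+\eta\}$ and integrate by parts there. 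On that shifted graph boundary the same sign argument applies, since $\nu$ is unchanged. One then lets $\eta\to0$, using continuity of the trace of $h$ on the shifted graphs in $L^1(\de x')$. In this version $T$ itself appears, so \eqref{eq:pairassumption} gives dominated convergence directly.

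\textbf{Removing the cutoff.} Finally let $R\to\infty$. The term $\int h\,\nabla\chi_R\cdot T$ tends to zero by the $L^1$ or $L^{n/(n-1)}$ integrability of $h$, combined with $|T|\lesssim$ a quantity controlled on the annuli. Monotone convergence applies on the left, since $h\,\de\Delta_{\mathcal D'}\Phi\geq0$ and $\Tr h\geq0$. This yields \eqref{eq:green} and, with it, the finiteness of $\int h\,\de\Delta_{\mathcal D'}\Phi$.
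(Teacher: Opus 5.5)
\paragraph{Verdict.} There is a genuine gap, and it is in the last step, not where you locate it.

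\paragraph{The local step is fine.} Your boundary sign argument is correct and is the same as the paper's. The obstacle you flag near $\partial U$ is not real. The supporting affine function at one point bounds $\Phi$ from below on bounded sets. For $c\in U$, the set $(\Omega\cap\overline{B_R})+c$ is a compact subset of $U$, so $\Phi$ is bounded above there. Hence $T(x)\cdot c\leq\Phi(x+c)-\Phi(x)$ is bounded on $U\cap B_R$. Apply this to finitely many $c\in U$ chosen so that every unit vector of $\Omega$ has positive inner product with one of them. Combined with $T\in D_1$, this shows that $T$ is bounded on $U\cap B_R$. Consequently your shifted or mollified local limits go through at each fixed cutoff radius.

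\paragraph{The gap: removing the spatial cutoff.} You need $\int_U h\,\nabla\chi_R\cdot T\de x\to0$. This requires control of $R^{-1}\int_{U\cap(B_{2R}\setminus B_R)}h|T|\de x$, and nothing in the hypotheses provides it.
\begin{itemize}
\item $T$ takes values in the unbounded cone $D_1$ and can grow arbitrarily fast at infinity. For example, on the half-space, $\Phi(x)=x_n+e^{x_1}$ has $T=(e^{x_1},0,\ldots,0,1)\in D_1$.
\item Assumption \eqref{eq:pairassumption} weighs $H(T)$ only against $H^\circ(\nabla h)$, not against $h/|x|$.
\item The integrability of $h$ removes the error term only under extra control of $T$. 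If $h\in L^1(U)$, you would need $|T|=O(R)$ on the annulus. If $h\in L^{n/(n-1)}(U)$, you would need $T$ bounded, because H\"older's inequality costs a factor $|B_{2R}|^{1/n}\sim R$.
\end{itemize}
Neither bound is available, so the phrase ``$|T|\lesssim$ a quantity controlled on the annuli'' has nothing behind it.

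\paragraph{The missing idea.} Make the slope bounded \emph{before} removing the spatial cutoff. The approximation must keep two properties: gradients in $D_1$, for the boundary sign, and pointwise domination by $H(T)$, for the final limit.

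The paper does this as follows; in its notation $N$ is the spatial radius and $R$ the slope bound. It sets
\[
\Phi_R=(\overline\Phi^*+\iota_{K_R})^*,\qquad K_R=D_1\cap\{H\leq R\},
\]
where $\overline\Phi$ is the supremum of the supporting affine functions of $\Phi$. Then:
\begin{itemize}
\item $\Phi_R$ is globally Lipschitz, with subgradients in $K_R$.
\item $\Phi_R$ agrees with $\Phi$ near any fixed compact subset of $U$ once $R$ is large.
\item $T_R\to T$ almost everywhere, with $H(T_R)\leq H(T)$.
\end{itemize}
Since $|\nabla\Phi_{R,\varepsilon}|\leq c_HR$, the cutoff error is at most $c_HRN^{-1}\int_{A_N}h$. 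This vanishes as $N\to\infty$ under either integrability assumption on $h$. Only afterwards does one let $\varepsilon\downarrow0$ and then $R\to\infty$, using dominated convergence with \eqref{eq:pairassumption}.

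This order of limits is essential. Your proposal removes the spatial cutoff last, with the full unbounded $T$, and that is exactly where it fails.
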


\begin{proof}
For each $z\in U$, let $\partial\Phi(z)$ denote the subdifferential of $\Phi$ at $z$. Every vector in $\partial\Phi(z)$ lies in $D_1$, since it belongs to the closed convex hull of limits of nearby gradients and $D_1$ is closed and convex. Define an extension to $\R^n$ by taking the supremum of the supporting affine functions at points of $U$:
\begin{equation*}
 \overline\Phi(x)=\sup_{z\in U,\,v\in\partial\Phi(z)}\{\Phi(z)+v\cdot(x-z)\}\,.
\end{equation*}
The function $\overline\Phi$ is lower semicontinuous and convex, never takes the value $-\infty$, and agrees with $\Phi$ on $U$. In particular, it is a proper convex function, although it may equal $+\infty$ outside $U$. For sufficiently large $R$, define
\begin{equation}\label{eq:slopetruncation}
 K_R=D_1\cap\{H\leq R\},\qquad \Phi_R=(\overline\Phi^*+\iota_{K_R})^*,\qquad T_R=\nabla\Phi_R\,.
\end{equation}
Here $F^*(y)=\sup_x\{x\cdot y-F(x)\}$ denotes the convex conjugate, and $\iota_{K_R}$ is zero on $K_R$ and $+\infty$ elsewhere. Thus
\begin{equation*}
 \Phi_R(x)=\sup_{y\in K_R}\{x\cdot y-\overline\Phi^*(y)\}\,.
\end{equation*}
Choose $z_0\in U$ and $v_0\in\partial\Phi(z_0)$. For $R\geq H(v_0)$, we have $v_0\in K_R$ and $\overline\Phi^*(v_0)=z_0\cdot v_0-\Phi(z_0)<\infty$. Evaluating the supremum at $v_0$ gives a finite lower bound. The inequality $\overline\Phi^*(y)\geq z_0\cdot y-\Phi(z_0)$ and compactness of $K_R$ give a finite upper bound for each $x$. Comparing the suprema at $x$ and $x+\xi$ also gives
\begin{equation*}
 \Phi_R(x+\xi)-\Phi_R(x)\leq\sup_{y\in K_R}\xi\cdot y\,.
\end{equation*}
Consequently, $\Phi_R$ is finite and globally Lipschitz, and all its subgradients belong to $K_R$.

If $v\in\partial\Phi(x)\cap K_R$, then $\overline\Phi^*(v)=x\cdot v-\Phi(x)$, so~\eqref{eq:slopetruncation} gives $\Phi_R(x)=\Phi(x)$. Since the subgradients of $\Phi$ are locally bounded in $U$, $\Phi_R$ agrees with $\Phi$ on a neighborhood of every fixed compact subset of $U$ once $R$ is sufficiently large. At points where both functions are differentiable, $H(T)\leq R$ implies $T_R=T$. If $H(T)>R$, the inclusion $T_R\in K_R$ gives $H(T_R)\leq R<H(T)$. Therefore, along a sequence $R\to\infty$,
\begin{equation}\label{eq:domination}
 T_R\longrightarrow T\quad\text{a.e.},\qquad H(T_R)\leq H(T)\quad\text{a.e.}\,.
\end{equation}

By convolving $\Phi_R$ on $\R^n$ with a nonnegative mollifier, we obtain a smooth convex function $\Phi_{R,\varepsilon}$. Its gradient $V=\nabla\Phi_{R,\varepsilon}$ takes values in $K_R$, since $K_R$ is closed and convex. Moreover, $|V|\leq c_HR$ and $\diver V\geq0$. Since $V-e\in\Omega$,~\eqref{eq:normal} gives $(V-e)\cdot\nu\leq0$ almost everywhere on the boundary. Choose a smooth cutoff $0\leq\chi_N\leq1$ that equals one on $B_N$, vanishes outside $B_{2N}$, and satisfies $|\nabla\chi_N|\leq c_\chi/N$. By the Gauss--Green formula and~\eqref{eq:projected}, we obtain
\begin{equation}\label{eq:cutoffgreen}
 \int_U\chi_Nh\Delta\Phi_{R,\varepsilon}\de x+\int_{\R^{n-1}}\chi_N\Tr h\de x'
 \leq-\int_U\chi_N\nabla h\cdot V\de x-\int_Uh\nabla\chi_N\cdot V\de x\,.
\end{equation}
In the boundary integral, $\chi_N$ means $\chi_N(x',\varphi(x'))$. To justify the Gauss--Green formula on this Lipschitz graph, use the change of variables $\Xi(z',t)=(z',t+\varphi(z'))$, $t>0$. For a smooth vector field $F$, write $F'=(F_1,\ldots,F_{n-1})$. The transformed field has components $F_i\circ\Xi$ for $i<n$ and $(F_n-\nabla\varphi\cdot F')\circ\Xi$ for $i=n$. Its divergence is $(\diver F)\circ\Xi$, and integration on the half-space $t>0$ gives the boundary flux $(\nabla\varphi\cdot F'-F_n)\de x'$. By approximation in $W^{1,1}$ on bounded boundary patches, the same formula holds for $F=\chi_NhV$. This argument uses only the first weak derivatives of $\varphi$.

Set $A_N=U\cap(B_{2N}\setminus B_N)$. The absolute value of the last term in~\eqref{eq:cutoffgreen} is bounded by $c_HRN^{-1}\int_{A_N}h$. For fixed $R$ and $\varepsilon$, this bound tends to zero as $N\to\infty$ if $h\in L^1(U)$. If $h\in L^{n/(n-1)}(U)$, H\"older's inequality gives
\begin{equation*}
 \frac{c_HR}{N}\int_{A_N}h\de x\leq c_{n,H}R\|h\|_{L^{n/(n-1)}(A_N)}\longrightarrow0\,.
\end{equation*}
Let $N\to\infty$. Fatou's lemma applies to the nonnegative Laplacian term, and dominated convergence applies to the other terms. Next let $\varepsilon\downarrow0$. The gradients are uniformly bounded and converge almost everywhere to $T_R$, while the Laplacian measures converge weakly on compact subsets of $U$ to $\Delta_{\mathcal D'}\Phi_R$. To pass to the limit in the Laplacian term, test against $\zeta h$, where $\zeta\in C_c(U)$ and $0\leq\zeta\leq1$. This is a continuous function with compact support in $U$. Taking a sequence of such cutoffs increasing pointwise to one yields
\begin{equation*}
 \int_Uh\de(\Delta_{\mathcal D'}\Phi_R)+\int_{\R^{n-1}}\Tr h\de x'\leq-\int_U\nabla h\cdot T_R\de x\,.
\end{equation*}
Finally, let $R\to\infty$. The right-hand side converges by dominated convergence, using~\eqref{eq:pairassumption} and~\eqref{eq:domination}. On each compact subset of $U$, $\Phi_R=\Phi$ in a neighborhood for all sufficiently large $R$, so their Hessian measures agree there. Applying this observation on an increasing sequence of compact subsets of $U$ proves~\eqref{eq:green} for the full Laplacian measure, including its singular part.
\end{proof}

\section{Proof of the main theorem}\label{sec:proof}

The translated cone $D_1$ has positive distance from the origin. Indeed, for $y\in D_1$, we have $1\leq y_n-\varphi(y')\leq|y_n|+K|y'|\leq\sqrt{1+K^2}\,|y|$. Hence $\dist(0,D_1)\geq(1+K^2)^{-1/2}$. Choose $C>0$ so that
\begin{equation}\label{eq:model}
 W(y)=\frac C{p'}H(y)^{p'},\qquad \int_{D_1}W^{-a}\de y=1,\qquad B_0=\int_{D_1}W^{1-a}\de y\,.
\end{equation}
Since $D_1$ has positive distance from the origin and $p'(a-1)\geq p'(n-1)>n$, the integrals in~\eqref{eq:model} and $\int_{D_1}H(y)^{p'}W(y)^{-a}\de y$ are finite. The convex conjugate of $W$ is
\begin{equation}\label{eq:conjugate}
 W^*(\xi)=\frac{C^{1-p}}pH^\circ(\xi)^p\,.
\end{equation}
To verify this formula, write $t=H(y)$ in the supremum defining $W^*$ and maximize $tH^\circ(\xi)-Ct^{p'}/p'$ over $t\geq0$. Set
\begin{equation}\label{eq:deficit}
 \begin{gathered}
 K_0=\frac{a-1}{p}C^{1-p}r^p,\qquad
 \mathcal V_a(f)=\begin{cases}(a-n)\int_U f^q\de x,&a>n,\\0,&a=n,\end{cases}\\
 \mathcal D_a(f)=\mathcal V_a(f)+K_0E(f)-J(f)-B_0\,.
 \end{gathered}
\end{equation}
At $a=n$, we define $\mathcal V_a(f)=0$ directly, since $f^q$ need not be integrable over $U$.

\begin{proposition}\label{prop:core}
Let $f\in X_a$ be a nonzero nonnegative extremal of~\eqref{eq:sharp}, normalized by $\int_Uf^s\de x=1$. Then there is a finite convex function $\Phi$ on $U$ such that $T=\nabla\Phi$ transports the probability measure $f^s\de x$ to $W^{-a}\mathbf1_{D_1}\de y$. This means that
\begin{equation*}
 \int_U\psi(T(x))f(x)^s\de x=\int_{D_1}\psi(y)W(y)^{-a}\de y
\end{equation*}
for every bounded Borel function $\psi$. Let $D_A^2\Phi$ denote the Alexandrov Hessian, namely the matrix of second derivatives in the almost-everywhere second-order expansion of $\Phi$. Write
\begin{equation*}
 B=D_A^2\Phi,\qquad \Delta_{\mathcal D'}\Phi=(\tr B)\de x+\mu_s\,,
\end{equation*}
where $\mu_s$ is the singular part of the distributional Laplacian. Then
\begin{equation}\label{eq:core}
 \mathcal D_a(f)\geq
 \int_U f^q\big[a-n+\tr B-a(\det B)^{1/a}\big]\de x
 +\int_U f^q\de\mu_s\geq0\,.
\end{equation}
At $a=n$, the term $a-n$ in the integrand vanishes.
\end{proposition}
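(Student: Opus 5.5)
The plan is the Cordero-Erausquin--Nazaret--Villani mass transport argument, adapted to the boundary measure through Lemma~\ref{lem:green}. By McCann's theorem, applied to the absolutely continuous probability measure $f^s\de x$ on $U$ and the probability measure $W^{-a}\mathbf 1_{D_1}\de y$, there is a convex $\Phi$ on $\R^n$ whose gradient pushes the first measure to the second. Since $f>0$ on $U$ by Lemma~\ref{lem:regularity}, $\Phi$ is finite on $U$, and $T=\nabla\Phi\in D_1$ almost everywhere there. McCann's theorem also gives the Monge--Amp\`ere equation in the Alexandrov sense: for $f^s\de x$-almost every $x$,
\begin{equation*}
 f(x)^s=W(T(x))^{-a}\det B(x),
\end{equation*}
and $B$ is nonnegative definite. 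Using $s=ar$, this can be written as $f^{-r}=W(T)(\det B)^{-1/a}$. Multiplying by $f^s$ then gives $f^{s-r}(\det B)^{1/a}=f^{s}W(T)^{-1}\cdot\ldots$; it is cleaner to write
\begin{equation*}
 \int_{D_1}W^{1-a}\de y=\int_U W(T)f^s\de x,\qquad W(T)=f^{-r}(\det B)^{1/a}.
\end{equation*}
Since $s-r=(a-1)r=q$, this gives $B_0=\int_U f^q(\det B)^{1/a}\de x$. This is the change-of-variables identity at the heart of the argument.

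Next I would bound $a\int_U f^q(\det B)^{1/a}$ from above. The arithmetic--geometric mean inequality gives $a(\det B)^{1/a}\leq a-n+\tr B$, because $B$ has $n$ eigenvalues and $a-n$ further entries equal to $1$. This accounts for the middle expression in~\eqref{eq:core}, and its nonnegativity is immediate. It then remains to prove
\begin{equation*}
 \int_U f^q(a-n+\tr B)\de x+\int_U f^q\de\mu_s-aB_0\leq\ldots
\end{equation*}
In this form the constants do not line up with $\mathcal D_a$, so I will organise the computation differently. I will show directly that
\begin{equation*}
 (a-1)\int f^q\de(\Delta_{\mathcal D'}\Phi)+(a-1)J(f)\leq(a-1)\Big(-\int\nabla(f^q)\cdot T\Big)
\end{equation*}
using Lemma~\ref{lem:green} with $h=f^q$. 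The lemma applies because of the properties of $h$ recorded in Lemma~\ref{lem:regularity}, and because~\eqref{eq:pairassumption} follows from H\"older's inequality with $\|\nabla f\|_p$, $\|f\|_s$ and $\int H(T)^{p'}f^s=\int_{D_1}H^{p'}W^{-a}<\infty$.

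Then I would use Young's inequality with the conjugate $W^*$ from~\eqref{eq:conjugate}:
\begin{equation*}
 -\nabla(f^q)\cdot T=-q f^{q-1}\nabla f\cdot T\leq\lambda f^s W(T)+\lambda W^*(q\lambda^{-1}f^{q-1-s/p'}\nabla f)\cdot\ldots,
\end{equation*}
where $\lambda$ is chosen so that the $W^*$ term becomes $K_0H^\circ(\nabla f)^p$ after the powers of $f$ cancel, using $(q-1)p'=s$. The $W(T)f^s$ term integrates to $B_0$ by the transport identity. Matching coefficients, I expect the normalisation to be arranged so that the choices $r$, $K_0=\frac{a-1}{p}C^{1-p}r^p$ combine the Green inequality, the determinant identity and Young's inequality into
\begin{equation*}
 \int f^q\big[a-n+\tr B-a(\det B)^{1/a}\big]+\int f^q\de\mu_s\leq(a-n)\int f^q+K_0E-J-B_0.
\end{equation*}
For $a>n$ the term $(a-n)\int f^q$ is simply added to both sides. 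The precise bookkeeping of these constants (the factors $a-1$, $r$ and $q$) I would verify carefully rather than trust.

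\emph{Main obstacle.} The delicate point is making these identities rigorous without smoothness. First, $f^q$ is only continuous, and $\Phi$ may be nonsmooth, which is why Lemma~\ref{lem:green} is used with the singular part $\mu_s\geq0$ kept. Second, the Monge--Amp\`ere equation holds only almost everywhere, so one must check that the set where $\det B=0$ does not spoil the determinant identity. I would handle this by using the inequality $\int_U F(T)f^s\geq\ldots$ only in the direction actually needed: integrating $f^q(\det B)^{1/a}$ over the set where $\det B>0$ is exactly the change of variables formula, which holds on a full-measure set for the source measure. Since $f>0$ everywhere on $U$, the source measure has full support, so this set has full Lebesgue measure.
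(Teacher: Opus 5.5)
Your route is the paper's: McCann's map, the Monge--Amp\`ere identity giving $B_0=\int_U f^q(\det B)^{1/a}\de x$, Lemma~\ref{lem:green} with $h=f^q$ (your check of \eqref{eq:pairassumption} by H\"older is also the paper's), Fenchel's inequality with $W^*$, and AM--GM. The gap is the step you explicitly defer, which is the actual content of \eqref{eq:core}. Your claim that the constants ``do not line up with $\mathcal D_a$'' is wrong. Multiplying Lemma~\ref{lem:green} by $a-1$ is unnecessary: the factor $a-1$ belongs inside Young's inequality.

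Your displayed Young inequality is also misstated. Since $s=(q-1)p'$, the factor $f^{q-1-s/p'}$ is identically $1$, and the $f^s$ weight is missing. Write $-\nabla(f^q)\cdot T=\lambda f^s\,\xi\cdot T$ with $\xi=-q\lambda^{-1}f^{q-1-s}\nabla f$. Then $\xi\cdot T\leq W(T)+W^*(\xi)$ and $s+(q-1-s)p=0$ give
\begin{equation*}
 -\nabla(f^q)\cdot T\leq\lambda f^sW(T)+\lambda^{1-p}q^p\,\frac{C^{1-p}}{p}\,H^\circ(\nabla f)^p .
\end{equation*}
Requiring the last coefficient to equal $K_0=\frac{a-1}{p}C^{1-p}r^p$ and using $q=(a-1)r$ forces $\lambda=a-1$. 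This is the paper's device: with $g=f^{-r}$ one has $\nabla(f^q)=-(a-1)f^s\nabla g$, and Fenchel is applied to $\nabla g\cdot T$. Integrating and using $\int_Uf^sW(T)\de x=B_0$, Lemma~\ref{lem:green} as stated gives
\begin{equation*}
 \int_U f^q\tr B\de x+\int_U f^q\de\mu_s+J(f)\leq(a-1)B_0+K_0E(f).
\end{equation*}
Now add $\mathcal V_a(f)-aB_0$ to both sides, with $aB_0=a\int_Uf^q(\det B)^{1/a}\de x$ and $\mathcal V_a(f)=(a-n)\int_Uf^q\de x$ for $a>n$. Since $(a-1)B_0-aB_0=-B_0$, this yields exactly the first inequality in \eqref{eq:core}, with no reorganisation.

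Splitting the integrand is legitimate because every piece is finite. This uses $B_0<\infty$, $E(f)<\infty$, the finiteness statement in Lemma~\ref{lem:green}, and $f\in L^q(U)$ when $a>n$.

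Two smaller points. For non-integer $a$, the ``$a-n$ extra eigenvalues equal to $1$'' argument must be read as weighted AM--GM, or replaced by $(\det B)^{1/a}\leq(\tr B/n)^{n/a}\leq(a-n+\tr B)/a$ as in the paper. Also, $\det B>0$ a.e.\ in $U$ holds because $f>0$ makes $f^s\de x$ and Lebesgue measure on $U$ mutually absolutely continuous. Full support of the source measure alone would not suffice.
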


\begin{proof}
By Lemma~\ref{lem:regularity}, $f$ has a strictly positive continuous representative in $U$. Define $g=f^{-r}$, $\rho=f^s=g^{-a}$ and $h=f^q=g^{1-a}$. The positive lower bound for $f$ on compact subsets allows us to apply the Sobolev chain rule, giving
\begin{equation}\label{eq:gchain}
 \nabla h=-(a-1)\rho\nabla g,\qquad \rho H^\circ(\nabla g)^p=r^pH^\circ(\nabla f)^p\,.
\end{equation}
McCann's theorem~\cite[Main Theorem, p.\ 310]{McC} gives a convex potential $\Phi$ whose gradient transports $\rho\de x$ to $W^{-a}\mathbf1_{D_1}\de y$ as stated above. The theorem does not require moment bounds on these measures. Both measures are absolutely continuous with respect to Lebesgue measure. Since $\rho>0$ throughout $U$, $\Phi$ is finite on a subset of $U$ whose complement has Lebesgue measure zero. The set where $\Phi$ is finite is convex, and every point of $U$ lies in the interior of the convex hull of that subset. Thus $\Phi$ is finite throughout $U$. McCann's Monge--Amp\`ere identity for these measures~\cite[Theorem 4.4 and Remark 4.5]{McC97} gives
\begin{equation}\label{eq:jacobian}
 \rho=W(T)^{-a}\det B,\qquad W(T)=g(\det B)^{1/a}\quad\text{a.e. in }U\,.
\end{equation}
Here $B$ is the Alexandrov Hessian; the identity holds without assuming that $T$ is smooth or that the measures have finite second moments. Since $\rho>0$ and $W(T)>0$, it also gives $\det B>0$ almost everywhere. By the transport identity in the proposition and~\eqref{eq:model}, we obtain
\begin{equation}\label{eq:moments}
 \int_U\rho W(T)\de x=B_0=\int_Uh(\det B)^{1/a}\de x\,.
\end{equation}
The distributional Hessian $D^2_{\mathcal D'}\Phi$ is a positive semidefinite matrix-valued measure. Its absolutely continuous part is $B\de x$, and the trace of its singular part is the nonnegative measure $\mu_s$.

H\"older's inequality and~\eqref{eq:chainexponents} give
\begin{align}\label{eq:pairing}
 \int_UH^\circ(\nabla h)H(T)\de x
 &\leq qE(f)^{1/p}\left(\int_Uf^{(q-1)p'}H(T)^{p'}\de x\right)^{1/p'}\notag\\
 &=qE(f)^{1/p}\left(\int_{D_1}H(y)^{p'}W(y)^{-a}\de y\right)^{1/p'}<\infty\,.
\end{align}
By~\eqref{eq:pairing}, Lemma~\ref{lem:green} applies. Use its boundary integration inequality~\eqref{eq:green} and the chain rule~\eqref{eq:gchain}, followed by Fenchel's inequality $\nabla g\cdot T\leq W(T)+W^*(\nabla g)$. The formula~\eqref{eq:conjugate} then gives
\begin{align}\label{eq:fenchelgreen}
 \int_Uh\tr B\de x+\int_Uh\de\mu_s+J(f)
 &\leq(a-1)\int_U\rho\nabla g\cdot T\de x\notag\\
 &\leq(a-1)B_0+K_0E(f)\,.
\end{align}
All terms are finite by~\eqref{eq:gchain},~\eqref{eq:moments} and~\eqref{eq:pairing}. The arithmetic--geometric mean inequality for the eigenvalues of $B$, followed by $t^{n/a}\leq1+(n/a)(t-1)$ for $t\geq0$, gives
\begin{equation}\label{eq:matrix}
 (\det B)^{1/a}\leq\left(\frac{\tr B}{n}\right)^{n/a}\leq\frac{a-n+\tr B}{a}\,.
\end{equation}
Combining~\eqref{eq:moments},~\eqref{eq:fenchelgreen} and~\eqref{eq:matrix} proves~\eqref{eq:core}. When $a=n$, the term $(a-n)h$ is absent, so the argument does not require $h\in L^1(U)$.
\end{proof}

The next lemma uses the multiplication by constants and spatial dilations from Zugmeyer's proof~\cite[Section 3.2, equations (16)--(17)]{Zug}. We track the equality conditions in these steps and determine the constant by evaluating the expressions at the explicit extremal~\eqref{eq:zug-model}.

\begin{lemma}\label{lem:normalization}
Let $F\in X_a$ be a nonzero nonnegative extremal of~\eqref{eq:sharp}. There are constants $c>0$ and $\lambda>0$ such that $f(x)=cF(\lambda x)$ satisfies $\int_Uf^s\de x=1$ and $\mathcal D_a(f)=0$. When $a=n$, we may take $\lambda=1$.
\end{lemma}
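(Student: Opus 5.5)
Under $F\mapsto cF(\lambda\,\cdot)$ every quantity in $\mathcal D_a$ scales by an explicit power of $c$ and $\lambda$. We choose $c,\lambda$ so that the normalization holds and an AM--GM (Young) inequality turns into an equality, and we then read off $\mathcal D_a(f)=0$ from the sharp constant. Since $U$ is a cone, $f=cF(\lambda\cdot)$ lies in $X_a$, and it is again a nonnegative extremal because \eqref{eq:sharp} is invariant under these operations. The scalings are
\[
\int_Uf^s=c^s\lambda^{-n}\int_UF^s,\quad E(f)=c^p\lambda^{p-n}E(F),\quad J(f)=c^q\lambda^{1-n}J(F),\quad \mathcal V_a(f)=c^q\lambda^{-n}\mathcal V_a(F).
\]
We use the fact that $\mathcal D_a(f)\ge0$ holds for every nonnegative extremal normalized by $\int f^s=1$, which is Proposition~\ref{prop:core}. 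We also use $\mathcal D_a(f_0^*)=0$ for the explicit extremal $f_0$ of \eqref{eq:zug-model}, suitably normalized. In that case the transport map is $T(x)=x+e$, so $B=I$, $\mu_s=0$, and every inequality in the proof of Proposition~\ref{prop:core} is an equality. Concretely, Fenchel is an equality because $\nabla g$ is proportional to $\nabla W(x+e)$, and the boundary term in Lemma~\ref{lem:green} is an equality because $x\cdot\nu=0$. We may instead verify $\mathcal D_a(f_0^*)=0$ directly by computing its terms.

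\textbf{Case $a=n$.} Take $\lambda=1$. We need $c^s\int F^s=1$ and $K_0c^pE(F)-c^qJ(F)=B_0$. With $s=p^*$, extremality gives $J(F)=D^qE(F)^{q/p}$, and the same relation holds for $f_0$. The constraint $\int f^s=1$ fixes $c$. Then $E(f)$ becomes a fixed multiple of a quantity that is invariant under scaling, namely $E/\|\cdot\|_{s}^p$.

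This invariance alone does not force $E(f)=E(f_0^*)$, so we proceed differently. Consider the one-parameter family $c\mapsto K_0c^pE-c^qD^qE^{q/p}$ with $q>p$. Its maximum over $c>0$ equals $\kappa_0$, a constant depending only on $n,p,H,\Omega$; here $\kappa_0$ is $B_0$ by the computation at $f_0$. Thus $\mathcal D_a(cF)\le0$ for the maximizing $c$.

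Proposition~\ref{prop:core}, applied to the normalized function, gives $\mathcal D_a\ge0$. Proposition~\ref{prop:core} also does not actually need the normalization constant as such; it needs $\int f^s=1$. The difficulty is that scaling $c$ changes $\int f^s$. We therefore use the transport to $W^{-a}$ with total mass $\int f^s$, or equivalently rescale $C$.

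A cleaner plan is to keep both parameters and impose two conditions: $\int f^s=1$, and the Young-equality condition that $(c,\lambda)$ maximizes $\Psi(c,\lambda)=\mathcal V_a+K_0E-J$ restricted to the curve $\int f^s=1$. For $a=n$ this curve is $c=c_F$ with $\lambda$ free, since $\lambda$ scales out. For $a>n$ the curve is parametrized by $\lambda$, with $c=\lambda^{n/s}(\int F^s)^{-1/s}$.

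\textbf{Case $a>n$.} Along the curve, substitute into $\Psi$ to obtain a combination of three powers of $\lambda$. By extremality, $J(F)=D^qE(F)^{\theta q/p}I(F)^{1-\theta}$. The exponents are arranged, and this is Zugmeyer's equations (16)--(17), so that Young's inequality with weights $\theta,1-\theta$ bounds $J(f)$ by a combination of $\mathcal V_a(f)$ and $K_0E(f)$ plus a constant. This gives $\Psi\le B_0$ everywhere on the curve, with equality exactly at a specific $\lambda_*$ where the Young equality holds. The value $B_0$ is identified by the equality case $f_0^*$. Choosing $\lambda=\lambda_*$ gives $\mathcal D_a(f)\le0$, and Proposition~\ref{prop:core} gives $\mathcal D_a(f)\ge0$, hence $\mathcal D_a(f)=0$.

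The main obstacle is the exponent bookkeeping. One has to check that, after fixing $\int f^s=1$, the maximum of $\Psi$ over the remaining parameter equals exactly $B_0$ for every extremal. This reduces to a computation showing that the maximum depends on $F$ only through $J/(E^{\theta q/p}I^{1-\theta})$, which is $D^q$ for every extremal, together with the matching computation for $f_0$, so that the constant is $B_0$.
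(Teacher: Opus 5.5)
\textbf{Gap in the case $a>n$.} This step fails, and it carries most of the lemma. Write $M=\int_UF^s\de x$. Every point $f=cF(\lambda\,\cdot)$ of your curve $c=\lambda^{n/s}M^{-1/s}$ is a normalized nonnegative extremal. So Proposition~\ref{prop:core} already gives $\Psi\ge B_0$ on the whole curve.

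By your own scalings, along the curve $\Psi(\lambda)=\alpha\lambda^{-n/a}+\beta\lambda^{p(a-n)/a}-\gamma\lambda^{(a-n)/a}$, with $\alpha=(a-n)M^{-q/s}I(F)>0$ and $\beta=K_0M^{-p/s}E(F)>0$. Hence $\Psi\to+\infty$ both as $\lambda\to0$ and as $\lambda\to\infty$. Your claim that $\Psi\le B_0$ everywhere on the curve, with equality at $\lambda_*$, is therefore false; together with Proposition~\ref{prop:core} it would force $\Psi$ to be constant. Likewise, the maximum in your last paragraph is $+\infty$.

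The Young step also points the wrong way. An upper bound for $J(f)$ by a combination of $\mathcal V_a(f)$ and $K_0E(f)$ is a \emph{lower} bound for $\Psi=\mathcal V_a+K_0E-J$, which only repeats Proposition~\ref{prop:core}. What you need is $\min_\lambda\Psi\le B_0$. Optimizing along the curve using only the extremality relation $J=D^qE^{\theta q/p}I^{1-\theta}$ does not give this. The minimum there depends on $F$ through a further combination of $M$, $E(F)$ and $I(F)$ that is invariant under amplitude and dilation. That this combination takes the model value for every extremal is an output of the argument, not an input.

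\textbf{The missing idea.} Decouple the two parameters: bound $\mathcal D_a$ uniformly in the amplitude, and only then choose the dilation. Use the notation of~\eqref{eq:scalar-constants} and~\eqref{eq:scale-function}, with $F_\lambda=F(\lambda\,\cdot)$ and $z=c^{p-q}$. Then
$c^{-q}\mathcal D_a(cF_\lambda)=\mathcal V_a(F_\lambda)-J(F_\lambda)+K_0E(F_\lambda)z-B_0z^{b'}$.
Young's inequality in $z$ bounds this by $D_0E(F_\lambda)^b+\mathcal V_a(F_\lambda)-J(F_\lambda)$ for \emph{every} $c>0$.

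Multiplying by $\lambda^{n-1}$ turns this bound into $D_0E(F)^b\lambda^d+(a-n)I(F)/\lambda-J(F)$. Its minimum over $\lambda$ is $\mathcal C E(F)^{q\theta/p}I(F)^{1-\theta}-J(F)$, which vanishes once $\mathcal C=D^q$. So $\mathcal D_a(cF_{\lambda_*})\le0$ for all $c$, in particular for the normalizing $c$, and Proposition~\ref{prop:core} forces equality. This is the paper's proof.

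\textbf{The case $a=n$.} Your computation $\sup_c[K_0c^pE-c^qJ]=\kappa_0$ is the $a=n$ instance of this uniform bound, and it already closes that case. It gives $\mathcal D_n(cF)\le\kappa_0-B_0=0$ for every $c$, not just the maximizing one. So the normalization issue you flagged is harmless, and no rescaling of $C$ or transport with a different mass is needed.

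\textbf{Identifying the constants.} The identity $\mathcal D_a(u_0)=0$ alone identifies neither $\kappa_0$ nor $\mathcal C$; for instance, it gives only $\kappa_0\ge B_0$. You also need that $c=1$ (and, for $a>n$, $\lambda=1$) are the optimizers for $u_0$. The calibration identities $\int_Uu_0^q=B_0$, $K_0E(u_0)=b'B_0$ and $J(u_0)=[(a-1)p'-n]B_0$ supply exactly this.
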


\begin{proof}
Write $M(F)=\int_UF^s\de x$ and $v=M(F)^{-1/s}F$. Multiplication by this positive constant preserves equality in~\eqref{eq:sharp}. Define
\begin{equation}\label{eq:scalar-constants}
 \gamma=\frac{p-1}{a},\qquad\eta=\frac{a-1}{a},\qquad
 b=\frac{a-1}{a-p},\qquad b'=\frac{a-1}{p-1},\qquad
 D_0=\frac{K_0^b}{b(B_0b')^{b-1}}\,.
\end{equation}
The constants in~\eqref{eq:scalar-constants} satisfy $1/b+1/b'=1$, $\eta=b'\gamma$, $q/s=\eta$ and $\eta-p/s=\gamma$. Applying Proposition~\ref{prop:core} to $v$ and using the powers of $M(F)$ in each term, we obtain
\begin{align}\label{eq:normalchain}
 0\leq M(F)^\eta\mathcal D_a(v)
 &=K_0E(F)M(F)^\gamma-B_0M(F)^\eta+\mathcal V_a(F)-J(F)\notag\\
 &\leq D_0E(F)^b+\mathcal V_a(F)-J(F)\,.
\end{align}
For the last inequality, set $z=M(F)^\gamma$ and use the maximum of $K_0E(F)z-B_0z^{b'}$ over $z\geq0$. This maximum equals $D_0E(F)^b$ and is attained at $z=(K_0E(F)/(B_0b'))^{b-1}$.

For $a>n$ and $\lambda>0$, set $F_\lambda(x)=F(\lambda x)$. Since $U$ is a cone, this function is defined on the same domain. A change of variables gives
\begin{equation*}
 E(F_\lambda)=\lambda^{p-n}E(F),\qquad
 I(F_\lambda)=\lambda^{-n}I(F),\qquad
 J(F_\lambda)=\lambda^{-(n-1)}J(F)\,.
\end{equation*}
These identities and~\eqref{eq:parameters} show that $F_\lambda$ is also an extremal. Apply~\eqref{eq:normalchain} to $F_\lambda$ and multiply its last expression by $\lambda^{n-1}$. The result is the following function of $\lambda$:
\begin{equation}\label{eq:scale-function}
 D_0E(F)^b\lambda^d+\frac{(a-n)I(F)}\lambda-J(F),\qquad d=\frac{(p-1)(a-n)}{a-p}>0\,.
\end{equation}
The minimum in~\eqref{eq:scale-function} occurs at
\begin{equation}\label{eq:scale}
 \lambda_* =\left(\frac{(a-n)I(F)}{dD_0E(F)^b}\right)^{1/(d+1)}\,.
\end{equation}
The minimum of the first two terms in~\eqref{eq:scale-function} is $\mathcal C E(F)^{b/(d+1)}I(F)^{d/(d+1)}$, where
\begin{equation}\label{eq:constant}
 \mathcal C=(d+1)d^{-d/(d+1)}D_0^{1/(d+1)}(a-n)^{d/(d+1)},\quad
 \frac b{d+1}=\frac{q\theta}p,\quad\frac d{d+1}=1-\theta\,.
\end{equation}
At $a=n$, no minimization over $\lambda$ is needed: $b=q/p$, and the last line of~\eqref{eq:normalchain} equals $D_0E(F)^{q/p}-J(F)$.

Consider $u_0(x)=W(x+e)^{-1/r}$. It belongs to $X_a$, satisfies $M(u_0)=1$, and is a positive multiple of Zugmeyer's extremal~\eqref{eq:zug-model}. Hence it attains equality in~\eqref{eq:sharp} by~\cite[Theorem 1.2]{Zug}. At every differentiability point $y\ne0$ of $H$, convexity and Euler's identity give
\begin{equation*}
 H(z)\geq H(y)+\nabla H(y)\cdot(z-y)=\nabla H(y)\cdot z
 \quad(z\in\R^n),\qquad y\cdot\nabla H(y)=H(y)\,.
\end{equation*}
Taking the supremum over $H(z)\leq1$, with equality at $z=y/H(y)$, yields $H^\circ(\nabla H(y))=1$. We obtain the identities
\begin{equation}\label{eq:calibration}
 \int_Uu_0^q\de x=B_0,\qquad K_0E(u_0)=b'B_0,\qquad J(u_0)=\big[(a-1)p'-n\big]B_0\,.
\end{equation}
The first identity follows by the change of variables $y=x+e$. For the energy identity, apply~\eqref{eq:gchain} with $g=W(x+e)$ and use $W^*(\nabla W)=(p'-1)W$. For the trace identity, let $h_0=W(x+e)^{1-a}$. Then
\begin{equation}\label{eq:model-divergence}
 \diver\big(h_0(x)(x+e)\big)=\big[n-(a-1)p'\big]h_0(x)\quad\text{a.e.}\,.
\end{equation}
Apply the Gauss--Green formula to $\chi_Nh_0(x)(x+e)$, where $\chi_N$ is the cutoff used in the proof of Lemma~\ref{lem:green}. On the graph, $(x+e)\cdot\nu=e\cdot\nu$, so~\eqref{eq:projected} and~\eqref{eq:model-divergence} give the trace identity in~\eqref{eq:calibration}. The error term containing $\nabla\chi_N$ is bounded by a constant times $\int_{U\cap(B_{2N}\setminus B_N)}h_0\de x$, which tends to zero because $h_0\in L^1(U)$ and $|x+e|/N$ is bounded on this annulus. This argument uses only the local Lipschitz continuity of $H$.

Since $K_0E(u_0)=b'B_0$, the value $z=M(u_0)^\gamma=1$ attains the scalar maximum used in~\eqref{eq:normalchain}. Thus $D_0E(u_0)^b=(b'-1)B_0$. Substituting~\eqref{eq:calibration} into~\eqref{eq:deficit} also gives $\mathcal D_a(u_0)=0$. If $a>n$, then $dD_0E(u_0)^b=(a-n)B_0$, so the minimizing value in~\eqref{eq:scale} is $\lambda_*=1$ for $u_0$. Therefore, evaluating the minimum at $u_0$ gives
\begin{equation*}
 \mathcal C=\frac{J(u_0)}{E(u_0)^{q\theta/p}I(u_0)^{1-\theta}}=D^q\,.
\end{equation*}
The last equality holds because $u_0$ attains equality in~\eqref{eq:sharp}. At $a=n$, the same calculation gives $D_0=J(u_0)/E(u_0)^{q/p}=D^q$.

For $a>n$, apply~\eqref{eq:normalchain} to $F_{\lambda_*}$ and multiply the last expression by $\lambda_*^{n-1}$. By~\eqref{eq:constant} and $\mathcal C=D^q$, the result is $D^qE(F)^{q\theta/p}I(F)^{1-\theta}-J(F)$, which is zero because $F$ is an extremal. At $a=n$, the last expression in~\eqref{eq:normalchain} is $D^qE(F)^{q/p}-J(F)=0$. Thus both inequalities in~\eqref{eq:normalchain} are equalities. For $a>n$, take $\lambda_*$ from~\eqref{eq:scale} and set $f=M(F_{\lambda_*})^{-1/s}F_{\lambda_*}$. For $a=n$, set $f=v$. In either case, $\int_Uf^s\de x=1$ and $\mathcal D_a(f)=0$.
\end{proof}

\begin{proof}[Proof of Theorem~\ref{thm:classification}]
Let $F\in X_a$ be a nonzero real-valued extremal and put $G=|F|$. Since
$E(G)=E(F)$, $J(G)=J(F)$ and, for $a>n$, $I(G)=I(F)$,
the function $G$ is a nonnegative extremal. Apply Lemma~\ref{lem:normalization} to $G$ and denote the resulting function by $f$. Since $\mathcal D_a(f)=0$, both nonnegative integrals in~\eqref{eq:core} vanish. If $a>n$, the scalar inequality
\begin{equation*}
 t^{n/a}\leq1+\frac na(t-1),\qquad t\geq0,
\end{equation*}
is an equality only at $t=1$. Taking $t=\tr B/n$ in~\eqref{eq:matrix} therefore gives $\tr B=n$ almost everywhere. Equality in the arithmetic--geometric mean inequality for the eigenvalues of $B$ then gives $B=I_n$ almost everywhere. If $a=n$, the scalar inequality is an identity, and equality in the arithmetic--geometric mean inequality gives $B=\sigma(x)I_n$. Here $\sigma(x)>0$ almost everywhere by~\eqref{eq:jacobian}.

In either case, $\int_Uf^q\de\mu_s=0$. For each compact $\mathcal K\Subset U$, Lemma~\ref{lem:regularity} gives $f\geq c_{\mathcal K}>0$, so $\mu_s(\mathcal K)=0$ and hence $\mu_s=0$. Let $S$ be the singular part of the distributional Hessian. This is a positive semidefinite matrix-valued measure with trace $\mu_s=0$, so all diagonal measures $S_{ii}$ vanish. For the standard coordinate vectors $e_i,e_j$, positivity of $(e_i\pm e_j)^TS(e_i\pm e_j)=\pm2S_{ij}$ forces $S_{ij}=0$. Thus the entire singular part of the Hessian is zero.

For $a>n$ this proves $D^2_{\mathcal D'}\Phi=I_n\de x$. At $a=n$, $\partial_{ij}\Phi=\delta_{ij}\sigma$ in distributions. Given $j$, choose $i\ne j$, which is possible because $n\geq2$. Commuting distributional derivatives gives
\begin{equation*}
 \partial_j\sigma=\partial_j\partial_{ii}\Phi=\partial_i\partial_{ij}\Phi=0\,.
\end{equation*}
Since $U$ is connected, $\sigma$ is a positive constant $\sigma_0$. In both cases, $D^2_{\mathcal D'}\Phi=\sigma_0I_n\de x$, with $\sigma_0=1$ when $a>n$. The distribution $\Phi-\sigma_0|x|^2/2$ has zero Hessian and is therefore affine. Taking its gradient gives
\begin{equation}\label{eq:affine}
 T(x)=\sigma_0x+b_0,\qquad\sigma_0>0,\qquad\sigma_0=1\ \text{if }a>n\,.
\end{equation}

Since $f>0$ in $U$, the measure $f^s\de x$ has support $\overline U=\Omega$. The measure $W^{-a}\mathbf1_{D_1}\de y$ has support $D_1$. The affine map $x\mapsto\sigma_0x+b_0$ is a homeomorphism of $\R^n$, so the measure obtained by transporting $f^s\de x$ under this map has support $\sigma_0\Omega+b_0$. Comparing the supports and using $\sigma_0\Omega=\Omega$, we obtain
\begin{equation}\label{eq:support-match}
 \sigma_0\Omega+b_0=\Omega+e,\qquad \Omega+(b_0-e)=\Omega\,.
\end{equation}
We now show that $\Omega+\ell=\Omega$ if and only if $\ell\in L$. If $\Omega+\ell=\Omega$, the fact that $0\in\Omega$ gives both $\ell\in\Omega$ and $-\ell\in\Omega$. Conversely, if $\ell,-\ell\in\Omega$, closure of a convex cone under addition gives $\Omega+\ell\subset\Omega$ and $\Omega-\ell\subset\Omega$. Translating the second inclusion by $\ell$ gives $\Omega\subset\Omega+\ell$. These cone properties also imply that $L=\Omega\cap(-\Omega)$ is a linear subspace. Therefore~\eqref{eq:support-match} yields $b_0=e+\ell_0$ for some $\ell_0\in L$.

Substituting~\eqref{eq:affine} into~\eqref{eq:jacobian} and using $\det B=\sigma_0^n$, we determine $g$:
\begin{equation}\label{eq:normalized-profile}
 g(x)=\begin{cases}W(x+e+\ell_0),&a>n,\\
 \sigma_0^{-1}W(\sigma_0x+e+\ell_0),&a=n.\end{cases}
\end{equation}
Since $f=g^{-1/r}$ and $W$ is homogeneous of degree $p'$,~\eqref{eq:normalized-profile} shows that $f$ is a positive multiple of $H(x+e+\ell_0)^{-(a-p)/(p-1)}$ when $a>n$. When $a=n$, it is a positive multiple of $H(x+\sigma_0^{-1}e+\sigma_0^{-1}\ell_0)^{-(n-p)/(p-1)}$. To recover $G$ when $a>n$, use $f(x)=M(G_{\lambda_*})^{-1/s}G(\lambda_*x)$ and set $z=\lambda_*x$. This gives
\begin{equation}\label{eq:restoration}
 G(z)=\widetilde A_+\,H(z/\lambda_*+e+\ell_0)^{-\frac{a-p}{p-1}}
      =A_+ H(z+\lambda_*e+\lambda_*\ell_0)^{-\frac{a-p}{p-1}}\,,
 \qquad A_+=\widetilde A_+\lambda_*^{\frac{a-p}{p-1}}>0\,.
\end{equation}
Thus~\eqref{eq:family} holds for $G$ with $t=\lambda_*$ and $\ell=\lambda_*\ell_0$ when $a>n$. When $a=n$, it holds with $t=\sigma_0^{-1}$ and $\ell=\sigma_0^{-1}\ell_0$. Since $L$ is a linear subspace, $\ell\in L$ in either case. Therefore $G$ has the form~\eqref{eq:family} with a positive multiplicative constant $A_+$.

It remains to determine the sign of $F$. Fix a relatively compact open set $V_0\Subset U$. The formula for $G$ gives $|F|=G\geq c_{V_0}>0$ almost everywhere on $V_0$. Choose a Lipschitz function $\chi:\R\to\R$ that agrees with $\operatorname{sgn}$ outside $(-c_{V_0}/2,c_{V_0}/2)$. Then $\chi(F)=\operatorname{sgn}F$ and $\chi'(F)=0$ almost everywhere on $V_0$. By the Sobolev chain rule, $\operatorname{sgn}F\in W^{1,p}(V_0)$ and $\nabla(\operatorname{sgn}F)=0$ there. Since $V_0$ was arbitrary, these statements hold locally throughout $U$. Connectedness of $U$ implies that $\operatorname{sgn}F$ is a constant in $\{-1,1\}$ almost everywhere. Hence $F=\pm G$, proving~\eqref{eq:family} with $A\in\R\setminus\{0\}$.

Conversely, $f_0$ in~\eqref{eq:zug-model} belongs to $X_a$ and attains equality by Zugmeyer's theorem~\cite[Theorem 1.2]{Zug}. Multiplying a function by a positive constant or replacing $f(x)$ by $f(\lambda x)$, with $\lambda>0$, preserves equality in~\eqref{eq:sharp}. Translation by $\ell=(\ell',\ell_n)\in L$ also preserves equality. Indeed, $\Omega+\ell=\Omega$ implies
\begin{equation*}
 \varphi(x'+\ell')=\varphi(x')+\ell_n\,.
\end{equation*}
Hence translating a boundary point by $\ell$ corresponds to the change of variables $x'\mapsto x'+\ell'$ and preserves $\de x'$. Because translation by $\ell$ also preserves $U$ and Lebesgue measure, it leaves $E$, $J$ and, when $a>n$, $I$ unchanged. Set $k=(a-p)/(p-1)$. Every function in~\eqref{eq:family} can be written as $At^{-k}f_0((x+\ell)/t)$, so it attains equality when $A>0$. The same is true when $A<0$, since all terms in~\eqref{eq:sharp} are unchanged by $f\mapsto-f$.
\end{proof}

\begingroup
\makeatletter
\let\@tocwrite\@gobbletwo
\makeatother
\section*{Declarations}

\noindent\textbf{Conflict of interest.}
The authors declare that they have no conflicts of interest.

\medskip

\noindent\textbf{Data availability.}
Data sharing is not applicable to this article, as no datasets were generated or analyzed during the current study.

\medskip

\noindent\textbf{Generative AI disclosure.}
The initial research ideas and overall direction of this work originated with the authors. Generative AI tools were used to assist with research discussions, checking mathematical arguments and references, and preparing and editing the manuscript, including its LaTeX source. The authors reviewed the AI-assisted material and take full responsibility for the mathematical results and the final manuscript.
\endgroup

\end{document}